\documentclass[12pt]{article}
\usepackage{amsthm}
\usepackage{amsmath,amsfonts,amssymb,rotating,colordvi}
\usepackage{color}
\usepackage{color, colortbl}
\usepackage{xcolor}
\usepackage{array}
\usepackage{multirow}
\usepackage{slashbox}

\usepackage[unicode=true,
 bookmarks=true,bookmarksnumbered=true,bookmarksopen=true,bookmarksopenlevel=2,
 breaklinks=false,pdfborder={0 0 1},backref=false,colorlinks=true]
 {hyperref}

\usepackage{tikz}

\usetikzlibrary{calc}
\usetikzlibrary{shapes.geometric}


\tikzset{
  c/.style={every coordinate/.try}
}


\usetikzlibrary{arrows,shapes,positioning}
\usetikzlibrary{decorations.markings}
\tikzstyle arrowstyle=[scale=1]
\tikzstyle directed=[postaction={decorate,decoration={markings,mark=at position 0.6 with {\arrow[arrowstyle]{stealth};}}}]
\tikzstyle reverse directed=[postaction={decorate,decoration={markings,mark=at position 0.4 with {\arrowreversed[arrowstyle]{stealth};}}}]
\tikzstyle dot=[style={circle,inner sep=1pt,fill}]
\hypersetup{pdftitle={Word-representability of triangulations of grid-covered cylinder graphs},
 pdfauthor={Zongqing Chen, Sergey Kitaev and Brian Y. Sun},
 pdfsubject={ },
 pdfkeywords={ },
 linkcolor=black,citecolor=black,urlcolor=blue,filecolor=blue,pdfpagelayout=OneColumn,pdfnewwindow=true,pdfstartview=XYZ,plainpages=false}

 \newtheorem{thm}{Theorem}[section]

\newtheorem{lem}[thm]{Lemma}

\numberwithin{equation}{section}

\newdimen\Squaresize \Squaresize=11pt
\newdimen\Thickness \Thickness=0.7pt
\def\Square#1{\hbox{\vrule width \Thickness
   \vbox to \Squaresize{\hrule height \Thickness\vss
    \hbox to \Squaresize{\hss#1\hss}
   \vss\hrule height\Thickness}
\unskip\vrule width \Thickness} \kern-\Thickness}

\def\Vsquare#1{\vbox{\Square{$#1$}}\kern-\Thickness}

\def\moins{\raise 1pt\hbox{{$\scriptstyle -$}}}

\parskip 6pt

\begin{document}

\begin{center}
\textbf{\large{On the probability of existence of a universal cycle or a universal word  for a set of words}}
\par\end{center}

\begin{center}
Herman Z.Q. Chen$^{a}$, Sergey Kitaev$^{b}$, Brian Y. Sun$^{c,d}$\\[6pt]
\par\end{center}

\begin{center}
$^{a}$School of Statistics and Data Science, Nankai University,\\
Tianjin 300071,P.R.\ China\\
$^{b}$Department of Computer and Information Sciences,\\
 University of Strathclyde, Glasgow, G1 1XH, UK\\
 $^{c}$ College of Mathematics and System Science,\\
Xinjiang University, Urumqi, Xinjiang 830046, P.R.China\\

$^d$School of Mathematics and Statistics $\&$ KLAS,\\
 Northeast Normal University, \\
 Changchun 130024, Jilin Province, P.R. China

\par\end{center}

\begin{center}
Email: $^{a}$\texttt{zqchern@163.com}, $^{b}$\texttt{sergey.kitaev@cis.strath.ac.uk},
$^{c}$\texttt{brianys1984@126.com}
\par
\end{center}

\par
\noindent \textbf{Abstract.}
A universal cycle, or u-cycle, for a given set of words is a circular word that contains each word from the set exactly once as a contiguous subword. The celebrated de  Bruijn sequences are a particular case of such a u-cycle, where a set in question is the set $A^n$ of all words of length $n$ over a $k$-letter alphabet $A$. A universal word, or u-word, is a linear, i.e. non-circular, version of the notion of a u-cycle, and it is defined similarly.

Removing some words in $A^n$ may, or may not, result in a set of words for which u-cycle, or u-word, exists. The goal of this paper is to study the probability of existence of the universal objects in such a situation. We give lower bounds for the probability in general cases, and also derive explicit answers for the case of removing up to two words in $A^n$, or the case when $k=2$ and $n\leq 4$.

\noindent \textbf{Keywords:} universal cycle, u-cycle, universal word, u-word, de Bruijn sequence

\section{Introduction}

A {\em universal cycle}, or {\em u-cycle},  for a given set $S$ with $\ell$ words of length $n$ is a circular word $u_0u_1\cdots u_{\ell-1}$ that contains each word from $S$ exactly once as a contiguous subword $u_iu_{i+1}\cdots u_{i+n-1}$ for some $0\leq i\leq \ell-1$, where the indices are taken modulo $n$. The notion of a universal cycle was introduced in \cite{CDG92}. The celebrated {\em de  Bruijn sequences} are a particular case of such a u-cycle, where a set in question is the set $A^n$ of all words of length $n$ over a $k$-letter alphabet $A$. Also, a {\em universal word}, or {\em u-word},  for $S$ is a (non-circular) word $u_0u_1\cdots u_{\ell+n-1}$ that contains each word from $S$ exactly once as a contiguous subword $u_iu_{i+1}\cdots u_{i+n-1}$ for some $0\leq i\leq \ell-1$.  In this paper, we assume that $n\geq 2$ and $k\geq 2$ to make all of our definitions be well-defined and to avoid trivialities.

There is a long line of research in the literature dedicated to the study of universal cycles and universal words for various sets of combinatorial structures. For example, see \cite{GG18} and references there in.  We note that the existence of a u-cycle trivially implies the existence of a u-word, but not vice versa. Indeed, if $u_0u_1\cdots u_{\ell-1}$ is a u-cycle for $S$ then $u_0u_1\cdots u_{\ell-1}u_0u_1\cdots u_{n-2}$ is a u-word for $S$. In either case, solving problems on u-cycles and u-word is normally done through considering {\em de  Bruijn graphs}. A de Bruijn graph $B(n,k)$ consists of $k^n$ nodes corresponding to words in $A^n$ and its directed edges are $x_1x_2\cdots x_n\rightarrow x_2\cdots x_nx_{n+1}$ where $x_i\in A$ for $i\in\{1,\ldots,n+1\}$. De Bruijn graphs are an important structure that is used in solving of a variety of problems, e.g. in combinatorics on words~\cite{M2005} and genomics~\cite{ZB2008}.

Let $G=(V,E)$ be a directed graph. A directed path in $G$ is a sequence $v_1,\ldots,v_t$ of distinct nodes such that there is an edge $v_i\rightarrow v_{i+1}$ for each $1\leq i\leq t-1$ and $v_i\in V(G)$. Such a path is  a {\em Hamiltonian path} if it contains all nodes in $G$. A closed Hamiltonian path ($v_t\rightarrow v_1$ is an edge) is a {\em Hamiltonian cycle}. If $G$ has a Hamiltonian cycle then $G$ is {\em Hamiltonian}.  It is well-known, and is not difficult to show, that $B(n,k)$ is Hamiltonian, so any Hamiltonian cycle (resp., path) in $B(n,k)$ corresponds to a u-cycle (resp., u-word) for $A^n$. For example, the cycle $00\rightarrow01\rightarrow11\rightarrow10\rightarrow00$ in $B(2,2)$ corresponds to the u-cycle $0011$, and we can also get a u-word $00110$ from this.\\[-3mm]

\noindent
{\bf The problem in question.} Now, suppose that  we remove $s<k^n$ words from $A^n$ where $A$ is an alphabet of size $k$. The resulting set $S$ may, or may not have a u-cycle or a u-word. Let $P_c(n,k,s)$ and $P_w(n,k,s)$ be the probabilities of the events that $S$ has a u-cycle and u-word, respectively. Then, a natural question is: What are $P_c(n,k,s)$ and $P_w(n,k,s)$? Note that by definition, a u-cycle for $S$ must cover at least $n$ distinct words, and thus if $s>k^n-n$ then $P_c(n,k,s)=0$.

\begin{table}
$$\begin{array}{c|c|c|c}
s & 1 & 2 & 3 \\
\hline
& & &  \\[-4mm]
P_c(2,2,s) & \frac{1}{2} & \frac{1}{6} & 0 \\
& & &  \\[-4mm]
P_w(2,2,s) & 1 & \frac{5}{6} & 1
\end{array}$$
\caption{Values of $P_c(2,2,s)$ and  $P_w(2,2,s)$ for $s\geq 1$}\label{val-Pc(2,2,s)-Pw(2,2,s)}
\end{table}

It is not difficult to see  that if $s=1$, or $s=k^n-1$, then with probability 1 a u-word exists. Indeed, if $s=1$ then removing a word in $A^n$ corresponds to removing a node in $B(n,k)$ that turns a Hamiltonian cycle passing through it to a Hamiltonian path giving a u-word, while if $s=k^n-1$ then only one word remains and it is a u-word. Similarly, it is not difficult to see that if $s=1$ then with probability $1/k^{n-1}$ a u-cycle exists. Indeed, if $s=1$ then  one can only remove words of the form $xx\cdots x$ called loops, and there are $k$ such words, while if $s=k^n-1$ then the only u-cycle of length $n$ can be a loop.

\begin{table}
$$\begin{array}{c|c|c|c|c|c|c|c}
s & 1 & 2 & 3 & 4 & 5 & 6 & 7\\
\hline
& & &  \\[-4mm]
P_c(3,2,s) & \frac{1}{4}& \frac{1}{14} & \frac{1}{28} & \frac{3}{70} & \frac{1}{28} & 0 & 0\\
& & & & & & & \\[-4mm]
P_w(3,2,s) &  1 & \frac{5}{7}  & \frac{13}{28} & \frac{5}{14} & \frac{5}{14}  & \frac{13}{28} & 1 \\
\end{array}$$
\caption{Values of $P_c(3,2,s)$ and  $P_w(3,2,s)$ for $s\geq 1$}\label{val-Pc(3,2,s)-Pw(3,2,s)}
\end{table}

In Tables~\ref{val-Pc(2,2,s)-Pw(2,2,s)}, \ref{val-Pc(3,2,s)-Pw(3,2,s)} and \ref{val-Pc(4,2,s)-Pw(4,2,s)} we present the values of $P_c(n,2,s)$ and $P_w(n,2,s)$ for $n=2,3,4$ obtained by Mathematica 11.3. Even though these tables were obtained by computer, it is possible to check them by hand for $n=2,3$ by considering the existence of a Hamiltonian path in $B(n,2)$. Moreover, in the case of $n=4$, one can consider Eulerian cycles/paths (to be introduced below) in $B(3,2)$ and be also able to check Table~\ref{val-Pc(4,2,s)-Pw(4,2,s)} by hand.\\[-3mm]

\begin{table}
$$\begin{array}{c|c|c|c|c|c|c|c|c|c|c|c|c|c|c|c}
s & 1 & 2 & 3 & 4 & 5 & 6 & 7 & 8 & 9 & 10 & 11 & 12 & 13 & 14 & 15\\
\hline
& & & & & & & & & & & & & & & \\[-4mm]
P_c & \frac{1}{8} & \frac{1}{60} & \frac{1}{140} & \frac{3}{910} & \frac{1}{546} & \frac{1}{728} & \frac{1}{1144} & \frac{1}{1287} & \frac{1}{1430} & \frac{1}{1144} & \frac{1}{728} & \frac{3}{1820} & \frac{1}{280} & 0 & 0 \\
& & & & & & & & & & & & & & & \\[-4mm]
P_w & 1 & \frac{13}{30} & \frac{13}{70} & \frac{1}{10} & \frac{23}{364} & \frac{355}{8008} & \frac{199}{5720} & \frac{62}{2145} & \frac{153}{5720} & \frac{31}{1144} & \frac{3}{91} & \frac{1}{20} & \frac{13}{140} & \frac{29}{120} & 1
\end{array}$$
\caption{Values of  $P_c(4,2,s)$ and $P_w(4,2,s)$ for $s\geq 1$}\label{val-Pc(4,2,s)-Pw(4,2,s)}
\end{table}

\noindent
{\bf Our results in this paper.}  In this paper, we not only provide lower bounds for $P_c(n,k,s)$ and $P_w(n,k,s)$ for any values of $n, k, s$ (summarized in Table~\ref{overview-tab}), but also give exact values in the case of $s=2$ in Theorem~\ref{thm-main}. For example, we will show that
 for $k\geq 3$ and $n\geq 2$,
\[P_w(n,k,2)=\frac{2(2k^n-3k+1)}{k^{n-1}(k^n-1)}.
\]
We remark that some of our proofs require rather subtle considerations, which tend to be more difficult in the case of the binary alphabet.  \\[-3mm]

\begin{table}[h]
\begin{center}
\begin{tabular}{|c|c|c|c|c|}
\hline
 \backslashbox{$k$}{$n$}& 2 & 3 & 4 & $\geq 5$ \\
\hline
2 &  \cellcolor{gray!20!} Table \ref{val-Pc(2,2,s)-Pw(2,2,s)} &  \cellcolor{gray!20!} Table \ref{val-Pc(3,2,s)-Pw(3,2,s)} &  \cellcolor{gray!20!} Table \ref{val-Pc(4,2,s)-Pw(4,2,s)}  & Thm \ref{thm-general-k=2}\\
\hline
$\geq 3$ & Thm \ref{Pcw-2ks-thm} &\multicolumn{3}{c|}{Thm \ref{thm-general}}    \\
\hline

\end{tabular}
\caption{References for the lower bounds for $P_c(n,k,s)$ and $P_w(n,k,s)$. The gray cells refer to exact values.}\label{overview-tab}

\end{center}
\end{table}

\noindent
{\bf Preliminaries.} In this paper, $B'(n,k)$ denotes the graph obtained from $B(n,k)$  after removing $s$ nodes, or $s$ edges depending on the context.

A directed graph is {\em strongly connected} if there exits a directed path from any node to any other node.  A directed graph is {\em connected} if for any pair of nodes $a$ and $b$ there exists a path in the underlying undirected graph. A {\em trail} in a directed graph $G$ is a sequence $v_1,\ldots,v_t$ of nodes such that there is an edge $v_i\rightarrow v_{i+1}$ for each $1\leq i\leq t-1$ and edges are not visited more than once. An {\em Eularian trail} in $G$ is a trail that goes through each edge exactly once. A closed Eulerian trail is an {\em Eulerian cycle}.  A directed graph is {\em Eulerian}  (resp., {\em semi-Eulerian}) if it has an Eulerian cycle (resp., Eulerian trail). Let $d^+(v)$ (resp., $d^-(v)$) denote the out-degree (resp., in-degree)  of a node $v$. The following result is well-known and is not hard to prove.

\begin{thm}\label{Eulerian-trail} A directed graph $G$ is semi-Eulerian if and only if at most one vertex $v$ has $d^+(v) - d^{-}(v) = 1$, at most one vertex $u$ has $d^-(u) - d^{+}(u) = 1$, every other vertex $w$ has $d^{+}(w)=d^{-}(w)$, and $G$ is connected. A graph is Eulerian if and only if it is balanced and (strongly) connected.\end{thm}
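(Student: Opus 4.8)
The plan is to prove the semi-Eulerian characterization in full and recover the Eulerian statement as the special case in which no unbalanced vertices occur; throughout I would read "connected" on the subgraph spanned by the edges, so that isolated vertices cause no trouble.

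\textbf{Necessity.} If $W$ is an Eulerian trail, then each interior occurrence of a vertex $w$ along $W$ consumes exactly one incoming and one outgoing edge, so $d^+(w)=d^-(w)$ for every vertex other than the two endpoints of $W$. If the two endpoints coincide, $W$ is closed and every vertex is balanced; if they differ, the initial vertex has $d^+-d^-=1$ and the terminal vertex has $d^--d^+=1$. Since $W$ meets every edge, $G$ is connected, and in the closed case it is even strongly connected because $W$ is then a closed walk through all edges. This settles the "only if" part of both assertions.

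\textbf{Sufficiency.} The heart of the matter is the Eulerian case, which I would handle in two steps. First, \emph{balanced $+$ connected $\Rightarrow$ strongly connected}: given an edge $u\to v$, let $R$ be the set of vertices reachable from $v$; no edge leaves $R$, so comparing $\sum_{w\in R} d^+(w)$ with $\sum_{w\in R} d^-(w)$ and invoking balance shows that no edge enters $R$ either, hence $R$ is a union of weak components and therefore, by connectivity, contains every non-isolated vertex, in particular $u$. Second, the \emph{splicing argument}: starting at any non-isolated vertex $x_0$ and greedily extending along unused edges, balance guarantees the walk can only get stuck back at $x_0$, yielding a closed trail $C$; if $C$ omits an edge, then $G-E(C)$ is again balanced and $G$ is connected, so some vertex $z$ of $C$ has an unused \emph{outgoing} edge, from which one builds another closed trail and splices it into $C$ at $z$. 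Since each splice strictly increases the number of used edges, finitely many steps turn $C$ into an Eulerian cycle.

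Finally, for the semi-Eulerian "if" direction in the genuinely unbalanced case, I would note that the hypothesis forces \emph{exactly} one vertex $v$ with $d^+-d^-=1$ and \emph{exactly} one vertex $u$ with $d^--d^+=1$, with $u\neq v$, since $\sum_w(d^+(w)-d^-(w))=0$ and all other vertices are balanced; adjoining a new edge $u\to v$ makes the graph balanced and still connected, hence Eulerian by the previous step, and deleting that edge from an Eulerian cycle (rotated so it is traversed last) leaves an Eulerian trail from $v$ to $u$. The remaining subcase, all vertices balanced, is exactly the Eulerian case, which a fortiori provides an Eulerian trail. I expect the only genuinely fiddly point to be the splicing step — verifying that the greedy trail must close up and that connectivity together with balance always supplies a merge vertex with an unused \emph{out}-edge — rather than anything conceptually deep.
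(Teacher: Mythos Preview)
The paper does not actually supply a proof of this theorem: it is stated as a well-known result (``The following result is well-known and is not hard to prove'') and then used as a tool. So there is no paper proof to compare against.

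Your argument is correct and is the standard Hierholzer-type proof. The necessity direction is routine. For sufficiency in the Eulerian case, your two steps are both sound: the ``balanced $+$ weakly connected $\Rightarrow$ strongly connected'' step works because in a balanced graph every non-isolated vertex has an incoming edge and hence can play the role of your $v$, so the reach set $R$ from any non-isolated vertex is all of the non-isolated vertices; and the splicing step is exactly Hierholzer's algorithm, with the one point you flagged (that some vertex $z$ of $C$ has an unused out-edge) following from weak connectivity (so $V(C)$ cannot be edge-isolated while edges remain) together with balance of $G-E(C)$ (so an unused incident edge at $z$ forces an unused out-edge). The reduction of the open-trail case to the closed case by adjoining the edge $u\to v$ is also standard and correctly oriented: $u$ has surplus in-degree and $v$ surplus out-degree, so $u\to v$ balances both. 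Your parity remark that $\sum_w(d^+(w)-d^-(w))=0$ forces \emph{exactly} one vertex of each type (or none) is the right way to rule out pathological degree configurations. Nothing is missing.
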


The line graph $L(G)$ of a directed graph $G$ is the directed graph whose vertex set corresponds to the edge set of $G$, and $L(G)$ has an edge $e\rightarrow v$  if in $G$, the head of $e$ meets the tail of $v$. It is well-know, and is not difficult to show, that $B(n,k)=L(B(n-1,k))$, and thus a Hamiltonian path (resp., cycle) in $B(n,k)$ corresponds to an Eulerian trail (resp., cycle) in $B(n-1,k)$, and this property will be often used throughout this paper to show the existence of u-cycles and u-words.

Nodes of the form $x^n$, as well as edges of the form $x^n\rightarrow x^n$, are {\em loops}. Nodes of the form $yx^{n-1}$ are  {\em out-special} and nodes of the form $x^{n-1}y$ are {\em in-special}. Out-special and in-special nodes together are {\em special}. The following theorem will be used by us in the paper multiple times.

\begin{thm}[\cite{B95}]\label{thm-Marc} Let $u$ and $v$ be two distinct non-loop nodes in $B(n,k)$. Then, there exist $k$ distinct node-disjoint paths from $u$ to $v$ if and only if $u$ is not out-special and $v$ is not in-special.\end{thm}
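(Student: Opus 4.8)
The plan is to argue through Menger's theorem, which equates the maximum number of internally vertex-disjoint $u$--$v$ paths with the minimum size of a set $C\subseteq V(B(n,k))\setminus\{u,v\}$ meeting every $u$--$v$ path. Since every node of $B(n,k)$ has out-degree and in-degree equal to $k$, there are never more than $k$ such paths, so the whole statement reduces to deciding whether the smallest $u$--$v$ separator has size $k-1$ or $k$. It helps to recall that a path leaving $u=u_1\cdots u_n$ is determined by the sequence of letters appended one at a time, which makes the candidate paths completely explicit, and that $B(n,k)$ has diameter $n$.

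For the ``only if'' direction I would argue by contraposition and produce a separator of size $k-1$. Suppose $u=yx^{n-1}$ is out-special with $y\ne x$. Every out-neighbour of $u$ has the form $x^{n-1}z$, and so does every out-neighbour of the loop node $x^{n}$; hence any simple $u$--$v$ path, after its at most one visit to $x^{n}$, must next enter the set $D:=\{x^{n-1}z:z\in A\}\setminus\{x^{n}\}$, which has size $k-1$. Now $x^{n-1}z$ is an in-special node for every $z\ne x$, so the hypothesis that $v$ is not in-special forces $v\notin D$; together with $u\notin D$ this makes $D$ a $u$--$v$ separator of size $k-1$, and hence at most $k-1$ disjoint paths exist. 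The case where $v=x^{n-1}y$ is in-special is dual: the in-neighbours of $v$ and of $x^{n}$ are all of the form $zx^{n-1}$, each of which is out-special when $z\ne x$, so the hypothesis that $u$ is not out-special keeps $u$ out of the corresponding size-$(k-1)$ separator.

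For the ``if'' direction, assume $u$ is not out-special and $v$ is not in-special; I would build $k$ internally disjoint paths directly. The natural candidates are the paths $P_c$, one for each $c\in A$, that leave $u$ by appending $c$ and then spell out $v$:
\[
P_c:\quad u_1\cdots u_n\ \to\ u_2\cdots u_n c\ \to\ u_3\cdots u_n c v_1\ \to\ \cdots\ \to\ c v_1\cdots v_{n-1}\ \to\ v,
\]
each of length $n+1$. For distinct $c$ the vertices of $P_c$ at a fixed stage differ, so the only ways two of these paths can collide, or a single $P_c$ can fail to be simple, is through coincidences among $x$-homogeneous prefixes, that is, near loop nodes. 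A case analysis shows that the obstructions to the $P_c$ (after trimming any repeated vertex) being simple and pairwise internally disjoint are precisely $u$ being out-special at the start and $v$ being in-special at the end; when neither holds, the finitely many remaining coincidences can be eliminated by rerouting the offending $P_c$ locally through a vertex not used by the others. The same content, phrased on the cut side, is that every $u$--$v$ separator of size at most $k-1$ must be one of the sets $D$ above (or an in-dual of one), which is exactly what the hypothesis excludes.

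The step I expect to be the main obstacle is this verification that the only small separators are the loop-induced ones, equivalently that the paths $P_c$ can always be repaired. The difficulty is concentrated in the configurations where $u$ or $v$ is only one letter away from being special (for instance $u=u_1u_2x^{n-2}$), where $u$ and $v$ are at very small distance, or where $u$ is an in-neighbour of $v$, and it is sharpest for the binary alphabet $k=2$, where the degree bound leaves no room and the rerouting has to be carried out by hand.
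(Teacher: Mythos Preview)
The paper does not prove this theorem at all: it is quoted from \cite{B95} (Baumslag) as a known result and used as a black box later in the arguments, so there is no ``paper's own proof'' to compare against.

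As for your proposal itself: the ``only if'' direction via the explicit $(k-1)$-separator $D=\{x^{n-1}z:z\ne x\}$ is clean and correct. The ``if'' direction, however, is only a plan. You write down the natural length-$(n{+}1)$ paths $P_c$ and then assert that, under the hypotheses, ``the finitely many remaining coincidences can be eliminated by rerouting the offending $P_c$ locally,'' and you yourself flag this as ``the main obstacle.'' That is exactly where all the work lies: the $P_c$ can share internal vertices in several ways (e.g.\ when a suffix of $u$ coincides with a prefix of $v$, or when $u$ and $v$ overlap so that some $P_c$ revisits $u$ or $v$), and showing that every such collision can be repaired without creating new ones requires a genuine case analysis or a structural argument, not just an appeal to Menger. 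As written, the ``if'' direction is a sketch with an explicit gap rather than a proof; if you want a self-contained argument you would either have to carry out that case analysis in full or consult Baumslag's algebraic treatment in \cite{B95}.
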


\noindent
{\bf Organization of the paper.}  In Sections~\ref{alp-size-3-or-more} and~\ref{binary-alph-sec} we provide the lower bounds for $P_c(n,k,s)$ and $P_w(n,k,s)$ in the cases of $k\geq 3$ and $k=2$, respectively. In Section~\ref{exact-s-2} we give exact values of $P_c(n,k,2)$  and $P_w(n,k,2)$, and in Section~\ref{final-sec} we provide some concluding remarks.

\section{The case of the alphabet of size $k\geq 3$}\label{alp-size-3-or-more}

Let $$S(k,s):=\sum{k\choose s_1}\prod_{i=2}^{n-2}{(i-1){k\choose 2}\choose s_i}$$ where the sum is taken over all $s_1+2s_2+\cdots+(n-2)s_{n-2}=s$ with $s_i\geq 0$ and $1\leq i\leq n-2$. In the next theorem, we will obtain the following lower bounds for $k\geq 3$ and $n\geq 3$:
\begin{equation}\label{gen-cycle}P_c(n,k,s)\geq \frac{S(k,s)}{{k^n\choose s}}\end{equation}
\begin{equation}\label{gen-word} P_w(n,k,s)\geq \frac{1}{{k^n\choose s}}\left(S(k,s) + \sum\alpha{k\choose s_1}\prod_{i=2}^{n-2}{(i-1){k\choose 2}\choose s_i} \right) \end{equation}
where $\alpha=k^n-s+s_1-k+1$ and the sum is taken over all $s_1+2s_2+\cdots+(n-2)s_{n-2}=s-1$ with $s_i\geq 0$ and $1\leq i\leq n-2$. The case of $n=2$ and $k\geq 3$ will be considered in Theorem~\ref{Pcw-2ks-thm} below.

\begin{thm}\label{thm-general} For  $k,n\geq 3$, the lower bounds in \eqref{gen-cycle} and \eqref{gen-word} hold. \end{thm}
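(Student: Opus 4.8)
The plan is to exhibit, for each way of deleting $s$ nodes that meets a certain "safe" pattern, a Hamiltonian cycle (for the u-cycle bound) or a Hamiltonian path (for the u-word bound) in the reduced de Bruijn graph $B'(n,k)$, and then to count those safe deletions. The combinatorial shape of $S(k,s)$ dictates the strategy: the summation index $s_i$ should record how many deleted nodes are "of type $i$," where type $1$ means loops $x^n$ (there are $k$ of them, giving the factor $\binom{k}{s_1}$), and for $2\le i\le n-2$ type $i$ should be a class of size $(i-1)\binom{k}{2}$ of non-loop, non-special nodes that are mutually "independent" in the sense that deleting any subset of them still leaves $B'(n,k)$ Hamiltonian. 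The natural candidate for the type-$i$ nodes is something like words of the form $x\,w\,y$ with $x\ne y$ and $w$ a word of length $n-2$ whose structure is governed by a parameter running over $i-1$ values (e.g. the position of the first descent, or the length of a maximal prefix block); the count $(i-1)\binom{k}{2}$ strongly suggests "choose an unordered pair $\{x,y\}$ of distinct letters and one of $i-1$ internal configurations." I would first pin down this classification of nodes precisely so that every non-loop non-special node lies in exactly one class, then verify that the classes indeed have the stated sizes.

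The core of the argument is a routing lemma: if the set $D$ of deleted nodes consists only of loops together with nodes drawn from these safe classes, then $B'(n,k) = B(n,k)\setminus D$ still has a Hamiltonian cycle. Here is where Theorem~\ref{thm-Marc} does the heavy lifting. The idea is to build the Hamiltonian cycle incrementally: start from a Hamiltonian cycle of $B(n,k)$ and "reroute" around each deleted node. Because a deleted non-loop node $v=xwy$ is not out-special and not in-special (that is the content of restricting to the safe classes), Theorem~\ref{thm-Marc} guarantees $k$ node-disjoint paths between suitable anchor nodes, and one can splice the cycle so that it still visits every surviving node while bypassing $v$; deleting loops is harmless since a loop is never needed to traverse the cycle. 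The delicate point — and I expect this to be the main obstacle — is doing these reroutings simultaneously for all $s$ deleted nodes without conflicts: one must argue that the classes were chosen so that the detour needed to avoid one deleted node does not pass through another deleted node, which is exactly why the per-class sizes are what they are and why nodes within a class must be pairwise "compatible." Getting the bookkeeping of node-disjointness right across all the deletions, and confirming that the safe node classes are closed under this operation, is the technical heart.

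For the counting: the number of ways to delete $s$ nodes so that $s_1$ of them are loops and $s_i$ of them come from class $i$ is $\binom{k}{s_1}\prod_{i=2}^{n-2}\binom{(i-1)\binom{k}{2}}{s_i}$, summed over all $s_1+2s_2+\cdots+(n-2)s_{n-2}=s$ — note the weight $i$ on $s_i$ appears because a class-$i$ deletion "costs" $i$ toward the total $s$; I would double-check that this weighting matches the parametrization (it should, if type $i$ nodes naturally come in bundles of size related to $i$, or if the constraint is that choosing a class-$i$ node forces $i$ of the $s$ slots). Dividing by the total number $\binom{k^n}{s}$ of ways to delete $s$ nodes gives the probability lower bound \eqref{gen-cycle}. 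For \eqref{gen-word}, the extra term accounts for deletions of $s$ nodes that are NOT all safe-for-cycles but still leave a Hamiltonian \emph{path}: the natural construction is to delete $s-1$ safe nodes (contributing $\sum\binom{k}{s_1}\prod\binom{(i-1)\binom{k}{2}}{s_i}$ over $s_1+\cdots=s-1$) plus one further node that breaks the cycle into a path; the coefficient $\alpha=k^n-s+s_1-k+1$ should count the number of valid choices for that last node — the total $k^n$ minus the $s-1$ already deleted, minus the $k$ loops (which can't serve as the path-breaking endpoint, adjusted by the $s_1$ loops already removed), perhaps with one more correction. I would verify $\alpha$ by checking it against the $n=3$ row of Table~\ref{val-Pc(3,2,s)-Pw(3,2,s)}... wait, that is $k=2$; instead I would sanity-check against a small $k=3$ case. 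Since every u-cycle gives a u-word, \eqref{gen-word}'s bracketed expression is at least $S(k,s)$, so \eqref{gen-word} refines \eqref{gen-cycle}, which is the consistency check I would run last.
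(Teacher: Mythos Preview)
Your proposal has a genuine gap: you are working in the wrong graph. The paper does \emph{not} delete nodes from $B(n,k)$ and reroute a Hamiltonian cycle using Theorem~\ref{thm-Marc}; instead it passes to $B(n-1,k)$ via the line-graph identity $B(n,k)=L(B(n-1,k))$, so that deleting $s$ nodes from $B(n,k)$ becomes deleting $s$ \emph{edges} from $B(n-1,k)$, and the goal becomes showing $B'(n-1,k)$ is Eulerian (resp.\ semi-Eulerian). The ``type $i$'' objects are not individual nodes but entire $i$-cycles of edges in $B(n-1,k)$: the $1$-cycles are the $k$ loops, and for $2\le i\le n-2$ the relevant $i$-cycles are the binary ones of the form $x^{m}y^{j}x^{m}y^{j}\cdots\rightarrow x^{m-1}y^{j}x^{m}y^{j}\cdots\rightarrow\cdots$ with $m+j=i$, $x<y$, giving $(i-1)\binom{k}{2}$ of them. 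This immediately explains the weighted constraint $s_1+2s_2+\cdots+(n-2)s_{n-2}=s$ that you were unsure about: deleting one $i$-cycle removes exactly $i$ edges, hence $i$ nodes of $B(n,k)$. Removing any union of these cycles keeps $B'(n-1,k)$ balanced (trivially, since whole cycles are removed) and strongly connected (for any deleted binary edge $A\to B$ over $\{x,y\}$, reroute through a third letter $z$: $A\to x_2\cdots x_{n-1}z\to\cdots\to B$, none of whose edges lie in a short binary cycle). No incremental rerouting, no simultaneous-detour bookkeeping, and no appeal to Theorem~\ref{thm-Marc} is needed.

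Your rerouting-in-$B(n,k)$ plan is not obviously salvageable: Theorem~\ref{thm-Marc} gives $k$ node-disjoint paths between two specified nodes, but splicing such paths into an existing Hamiltonian cycle so that the result is still Hamiltonian on the smaller vertex set is a much stronger conclusion than the theorem provides, and you yourself flag the simultaneous-deletion conflicts as ``the main obstacle'' without a resolution. For \eqref{gen-word}, the paper's extra term also falls out cleanly in the edge picture: remove $s-1$ edges from the binary cycles (still Eulerian), then remove one further non-loop edge to make the graph semi-Eulerian; the number of such edges is $\alpha=k^n-(s-1)-(k-s_1)$, i.e.\ all $k^n$ edges minus the $s-1$ already removed minus the $k-s_1$ remaining loops (excluded only to avoid double counting with the cycle case).
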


\begin{proof}
Assume $k\geq 3$. We observe that removing all $i$-cycles in  $B(n-1,k)$, $1\leq i\leq n-2$, of the binary form, that is, involving only nodes $x_1\cdots x_{n-1}$ for $x_j\in\{x,y\}$ for $1\leq j\leq n$ and $x,y\in\{1,\ldots,k\}$, results in a strongly connected and balanced graph $B'(n-1,k)$. Indeed, clearly  $B'(n-1,k)$ is balanced. To justify that  $B'(n-1,k)$ is strongly connected, we need to show that for any edge $e=A\rightarrow B$ belonging to a removed binary cycle, there is a directed path $P_{AB}$ from $A$ to $B$ which does not go through any other edge from the removed binary cycles. Then, in a path $P_{XY}$ in $B(n-1,k)$ from a node $X$ to a node $Y$, we can replace any such $e$ with $P_{AB}$, so that it gives a path in $B'(n-1,k)$ from $X$ to $Y$.

Suppose $A=x_1\cdots x_{n-1}$ and $B=x_2\cdots x_n$ where all $x_j\in\{x,y\}$ for some $x$ and $y$. Let $z\neq x,y$. Then, $P_{AB}$ is given by
$$A\rightarrow x_2\cdots x_{n-1}z \rightarrow x_3\cdots x_{n-1}zx_2 \rightarrow x_4\cdots x_{n-1}zx_2x_3 \rightarrow\cdots \rightarrow B$$
since no of the edges in $P_{AB}$ belongs to an $i$-cycle for $i<n-1$. So, $B'(n-1,k)$ is Eulerian, and thus its line graph $B'(n,k)$ is Hamiltonian, and there exists a u-cycle corresponding to it.

To justify \eqref{gen-cycle}, we consider $i$-cycles, $2\leq i\leq n-2$, of the form
$$x^{m}y^{j}x^{m}y^{j}\ldots \rightarrow x^{m-1}y^{j}x^{m}y^{j} \ldots  \rightarrow x^{m-2}y^{j}x^{m}y^{j}\ldots \rightarrow \cdots$$
where $x<y$,  $m+j=i$ and $1\leq m,j\leq i-1$. Note that no two of such cycles can share an edge. Thus, we can remove in $B(n-1,k)$ $s_i$ such $i$-cycles  for $1\leq i\leq n-2$ so that the total number of removed edges (corresponding to the total number of removed nodes in $B(n,k)$) is $s$. Clearly, the number of such 1-cycles is $k$, and for $i\geq 2$, the number of such $i$-cycles is $(i-1){k\choose 2}$.

To justify \eqref{gen-word} we note that if all of the $s$ removed edges come from the binary cycles considered above, then the same lower bound as in  \eqref{gen-cycle} will be obtained. This bound, can be improved as follows. Begin with removing $s-1$ edges coming from the binary $i$-cycles as above, which will result in an Eulerian graph, so that we can remove any edge $e$ in such a graph and obtain a semi-Eulerian graph corresponding to a u-word. To count the possibilities to remove such an $e$, we do not want $e$ to be a loop, because this will result in some double counting. However, if $e$ is not a loop, all the cases will be different from already considered cases, because before we were removing entire $i$-cycles for some $i$. This explains the term $\alpha=k^n-(s-1)-(k-s_1)$ in  \eqref{gen-word}.
\end{proof}

In the proof of the next theorem we need the following simple lemma. There, by a circular binary string we mean a number of digits 0 and 1 placed around a circle in positions labeled by $1, 2,\ldots$.

\begin{lem}\label{cycle-no-11-lem} For $k\ge 2$, the number of circular binary strings with $i$ $1$s and $k-i$ $0$s, $0\leq i\leq k$, in which no two $1$s stay next to each other is given by $${k-i-1\choose i-1}+{k-i\choose i}.$$\end{lem}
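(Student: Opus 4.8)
The plan is to classify circular binary strings of length $k$ with exactly $i$ ones (and no two adjacent ones) according to what sits in position $1$. Write $f(k,i)$ for the number we want to count. If position $1$ holds a $0$, then cutting the circle open between positions $k$ and $1$ turns the problem into counting \emph{linear} binary strings of length $k-1$ with $i$ ones and no two adjacent ones (the circular adjacency between positions $k$ and $1$ is automatically satisfied since position $1$ is a $0$). If position $1$ holds a $1$, then by the non-adjacency condition positions $2$ and $k$ must both be $0$; deleting positions $1,2,k$ leaves a linear string of length $k-3$ with $i-1$ ones and no two adjacent ones, and no further constraint (the two deleted $0$'s buffer the removed $1$ from the ends).

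The next step is to invoke the standard count for linear strings: the number of binary strings of length $m$ with exactly $j$ ones and no two adjacent ones is $\binom{m-j+1}{j}$, by the usual stars-and-bars argument (place the $j$ ones into the $m-j+1$ gaps created by the $m-j$ zeros, choosing $j$ of those gaps). Applying this with $(m,j)=(k-1,i)$ gives $\binom{k-i}{i}$ for the case position $1$ is $0$, and with $(m,j)=(k-3,i-1)$ gives $\binom{(k-3)-(i-1)+1}{i-1}=\binom{k-i-1}{i-1}$ for the case position $1$ is $1$. Summing the two cases yields
\[
f(k,i)=\binom{k-i-1}{i-1}+\binom{k-i}{i},
\]
as claimed.

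Finally I would check the degenerate boundary cases to make sure the binomial-coefficient conventions behave: when $i=0$ there is a unique all-zeros string and the formula gives $\binom{k-1}{-1}+\binom{k}{0}=0+1=1$; when $i=k$ (only possible for $k\le 1$, or vacuously giving $0$ for $k\ge 2$) the formula returns $0$ as it should since two $1$'s would be forced adjacent on a cycle of length $\ge 2$; and small cases like $k=2$, $i=1$ give $\binom{0}{0}+\binom{1}{1}=2$, matching the two strings $10$ and $01$ on a $2$-cycle (here the two positions are not considered adjacent to themselves, so both are valid). The only mild subtlety — and the one point that deserves a sentence of care in the writeup — is justifying that the ``cut open'' operation is a genuine bijection onto \emph{all} linear strings of the stated length and weight, i.e. that no circular adjacency constraint survives the cut; this is exactly why we split on the value in position $1$ rather than cutting at an arbitrary place, and in both branches the positions adjacent to the cut are zeros, so the correspondence is clean.
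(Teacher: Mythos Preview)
Your proof is correct and follows essentially the same approach as the paper: both arguments split on whether position~$1$ holds a $0$ or a $1$, reduce each case to the standard linear count $\binom{m-j+1}{j}$, and sum to obtain $\binom{k-i-1}{i-1}+\binom{k-i}{i}$. Your write-up is slightly more detailed in that you spell out the bijection and verify boundary cases, but the underlying decomposition is identical.
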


\begin{proof} Let $h(k,i)$ be the number of binary (non-circular) strings with $i$ $1$s in which no two $1$s stay next to each other. Then, $h(k,i)={k-i+1\choose i}$. Indeed, $h(k,i)$ clearly counts placing $i$ $1$s in a binary string of length $k-i+1$ and then replacing each $1$, but the rightmost $1$, by $10$. For the circular case, if $0$ is in position 1, then we clearly have $h(k-1,i)$ such strings. On the other hand, if $1$ is in position 1 in the circular case, then we have  $h(k-3,i-1)$ such strings since then positions $k$ and $2$ must be occupied by $0$s. This completes the proof.\end{proof}

\begin{thm}\label{Pcw-2ks-thm} Let $n=2$ and $k\geq 3$, $f(k,s):=\sum_{i=0}^{k(k-3)/2}{\frac{k(k-3)}{2}\choose i}{k\choose s-2i}$,
$$g(k,s):=\sum_{i\geq 3}(i-1)!\left({k-i-1\choose i-1}+{k-i\choose i}\right){k\choose s-i},$$ $U(k,s):=f(k,s)+2f(k,s-k)+g(k,s)+2g(k,s-k)$ and $$V(k,s):=k(k-3)\left({k \choose s-1}+2{k\choose s-k-1}\right).$$ Then,
\begin{equation}\label{bound-Pc2ks}
P_c(2,k,s)\geq \frac{U(k,s)}{{k^2\choose s}}.
\end{equation}
Also,
\begin{equation}\label{bound-Pw2ks}
P_w(2,k,s)\geq  \frac{U(k,s) + V(k,s)+2k\sum_{j=1}^{k-1}(f(k,s-j)+g(k,s-j))}{{k^2\choose s}}
\end{equation}
\end{thm}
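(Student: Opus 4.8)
The plan is to handle the case $n=2$ directly in $B(2,k)$, since for $n=2$ passing to a line graph does not simplify matters: $B(2,k)$ has $k^2$ nodes, its non-loop nodes are exactly the $k(k-1)$ ordered pairs $xy$ with $x\neq y$, and it is both Hamiltonian and Eulerian. Removing $s$ nodes yields $B'(2,k)$, and by the line-graph correspondence (applied one level up, i.e. $B(2,k)=L(B(1,k))$ where $B(1,k)$ is the complete directed graph with loops on $k$ vertices), a u-cycle for the surviving set of words exists iff $B'(2,k)$ has a Hamiltonian cycle, equivalently iff the corresponding spanning subgraph of $B(1,k)$ obtained by \emph{deleting the $s$ edges} is Eulerian (strongly connected and balanced); a u-word corresponds to a semi-Eulerian such subgraph. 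So the whole theorem reduces to counting, among the $\binom{k^2}{s}$ ways to delete $s$ edges of the complete directed graph-with-loops $K_k^{\circ}$ on vertex set $\{1,\dots,k\}$, how many leave a (semi-)Eulerian digraph; the stated bounds come from exhibiting explicit families of such deletions.

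First I would describe the edge-deletion families that keep the digraph balanced. An edge set whose removal preserves balance is precisely an edge set that decomposes into directed cycles (including loops, which are $1$-cycles, and $2$-cycles $x\to y\to x$). On $K_k^{\circ}$ the edge-disjoint directed cycles one can easily enumerate split naturally into: (a) loops — there are $k$ of them; (b) $2$-cycles — each unordered pair $\{x,y\}$ gives one, and these are pairwise edge-disjoint, giving $\binom{k}{2}$ of them; (c) longer cycles of length $i\ge 3$ on a chosen $i$-subset of vertices — on a fixed $i$-set there are $(i-1)!$ directed Hamiltonian cycles, but distinct such cycles on the same vertex set need not be edge-disjoint, which is why the combinatorics in $g(k,s)$ is more delicate. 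To get a \emph{clean} family I would pick, for each $i$-subset, a \emph{single canonical long cycle} and, more importantly, combine a long cycle on some vertices with $2$-cycles and loops on the remaining vertices — this "mixed" bookkeeping is exactly what produces $f(k,s)$ (choose $i$ disjoint $2$-cycles among a fixed set of $\frac{k(k-3)}{2}$ available $2$-cycles, plus $s-2i$ loops — wait: more precisely, $f(k,s)=\sum_i \binom{k(k-3)/2}{i}\binom{k}{s-2i}$ counts choosing $i$ two-cycles from a pool of size $\frac{k(k-3)}{2}$ together with $s-2i$ loops) and the term $g(k,s)$ (choose one long $i$-cycle, via Lemma~\ref{cycle-no-11-lem} applied to count the "no two chosen vertices adjacent" configurations along a reference cycle, times $(i-1)!$ for its internal orientation, times $\binom{k}{s-i}$ loops). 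The factors $2f(k,s-k)$ and $2g(k,s-k)$, and the "$\pm k$'' shifts, account for deleting \emph{all} $k$ loops (so that the surviving digraph is loopless but still Eulerian) combined with the same constructions on the non-loop part, where the extra factor $2$ reflects a symmetry (e.g. reversal of all orientations, or the two ways to route around). After assembling these, strong connectivity must be checked: deleting a few cycles from $K_k^{\circ}$ for $k\ge 3$ leaves a strongly connected digraph because between any two vertices there is still a direct edge or a two-step detour through an untouched third vertex (using $k\ge 3$, as in the proof of Theorem~\ref{thm-general}). Summing the sizes of these disjoint families gives $U(k,s)$, proving \eqref{bound-Pc2ks}.

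For \eqref{bound-Pw2ks} I would start from an Eulerian $B'(2,k)$ produced by deleting $s-1$ edges as above, and then delete one more edge $e$ that is \emph{not} part of the already-removed cycles, so that the result is semi-Eulerian rather than Eulerian: removing a single non-cycle edge creates exactly one vertex with $d^+-d^-=1$ and one with $d^--d^+=1$, and strong connectivity of the underlying object is retained, so Theorem~\ref{Eulerian-trail} gives a u-word. Counting the admissible extra edges $e$ (avoiding double-counting with the cycle-only families, exactly as in the proof of Theorem~\ref{thm-general}) yields the $V(k,s)$ term — the $k(k-3)$ factor being the number of non-loop, non-$2$-cycle edges available after the main construction — and the $2k\sum_{j=1}^{k-1}\bigl(f(k,s-j)+g(k,s-j)\bigr)$ term handling the mixed scenario where some but not all loops are removed, with the sum over $j$ = number of loops deleted and the factor $2k$ collecting the orientation/loop choices. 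I expect the main obstacle to be precisely the edge-disjointness and no-double-counting bookkeeping for the long cycles of length $\ge 3$: unlike loops and $2$-cycles, two long cycles on overlapping vertex sets can share edges, so one must fix a reference cyclic order and invoke Lemma~\ref{cycle-no-11-lem} to count which vertex subsets can simultaneously support chosen cycle pieces without conflict, then multiply by $(i-1)!$ for orientations — and verifying that every configuration counted in $U$, $V$ and the correction sums is genuinely distinct and genuinely (semi-)Eulerian is where the "subtle considerations" the introduction warns about will be concentrated.
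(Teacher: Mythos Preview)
Your overall framework is right: work in $B(1,k)$ (the complete directed graph with loops on $\{1,\dots,k\}$), delete $s$ edges, and check (semi-)Eulerianity. But several of your term-by-term identifications are wrong, and they all stem from missing the one organizing idea in the paper's proof: the distinction between \emph{special} $2$-cycles (those on consecutive vertices $x,x{+}1$ modulo $k$) and the remaining $\binom{k}{2}-k=\tfrac{k(k-3)}{2}$ \emph{non-special} $2$-cycles. The special edges form the two directed Hamiltonian $k$-cycles $1\to 2\to\cdots\to k\to 1$ and its reverse; as long as at least one of these survives, strong connectivity is automatic. This backbone---not a ``third-vertex detour''---is what guarantees connectivity, and your detour argument would fail once many non-special $2$-cycles are removed.

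With that in hand, the terms read off directly. In $f(k,s)$ you choose $i$ of the $\tfrac{k(k-3)}{2}$ non-special $2$-cycles and $s-2i$ loops; the special backbone is untouched. In $g(k,s)$ you pick an $i$-set of vertices, $i\ge 3$, containing no two circularly adjacent vertices (this is exactly what Lemma~\ref{cycle-no-11-lem} counts), so that any directed cycle on them avoids all special edges; then $(i-1)!$ counts such cycles, and the remaining $s-i$ deletions are loops. The shift $s\mapsto s-k$ and the factor $2$ have nothing to do with deleting all loops: they come from first deleting one of the two special directed $k$-cycles ($k$ edges each, two choices), after which the opposite one still gives strong connectivity, and then repeating the $f$ or $g$ construction on the remaining budget.

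For the u-word bound your readings of the two extra terms are also off. In $V(k,s)$ the factor $k(k-3)$ is the number of edges belonging to non-special $2$-cycles (every non-loop edge lies in some $2$-cycle, so ``non-loop, non-$2$-cycle edges'' is empty): remove one such edge, possibly after first removing one special $k$-cycle (whence the second summand with shift $s-k-1$ and factor $2$), and fill the rest with loops. In the term $2k\sum_{j=1}^{k-1}(f(k,s-j)+g(k,s-j))$, the index $j$ is not a number of loops: it is the length of a directed \emph{path} removed along the special backbone (creating exactly one vertex with out-excess $1$ and one with in-excess $1$), with $2$ choices of direction and $k$ choices of starting vertex; after that you run the $f$/$g$ cycle-removal on the remaining $s-j$ edges.

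Once you insert the special/non-special distinction, the disjointness and no-double-counting checks become straightforward (loops, non-special $2$-cycles, a single long cycle on a non-adjacent vertex set, and the special $k$-cycle are pairwise edge-disjoint by construction), and both \eqref{bound-Pc2ks} and \eqref{bound-Pw2ks} follow.
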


\begin{proof} Instead of removing $s$ nodes in $B(2,k)$, we consider removing $s$ edges in $B(1,k)$, whose nodes are $k$ loops $1,\ldots k$, and for every pair of nodes $x$ and $y$, both $x\rightarrow y$ and $y\rightarrow x$ are present. We call a 2-cycle in $B(1,k)$ {\em special} if it  involves nodes $x$ and $x+1$ for $1\leq x\leq k-1$, or $1$ and $k$. Clearly, the number of non-special 2-cycles is $\frac{k(k-3)}{2}$.

To justify \eqref{bound-Pc2ks}, note that removing all the edges in any $i$ of non-special 2-cycles in $B(1,k)$, and then removing $s-2i$ loops results in a balanced and strongly connected graph showing the existence of a u-cycle in this case. The number of ways to proceed in this way is clearly given by $f(k,s)$. Moreover, we can proceed in the same way after first removing the $k$ edges either from the cycle $1\rightarrow 2\rightarrow \cdots\rightarrow k\rightarrow 1$, or from the cycle $k\rightarrow (k-1)\rightarrow \cdots\rightarrow k\rightarrow 1$, which explains the term of $2f(k,s-k)$.

To produce a more subtle estimate, we will be removing just a single $i$-cycle for a fixed $i\geq 3$ from $B(1,k)$, which is clearly not counted previously. The only condition on removing such an $i$-cycle is that it must not involved any of the edges in a special 2-cycle for us to guarantee strong connectivity of the obtained graph.  The number of ways to selected $i$ nodes to form such an $i$-cycle is given by Lemma~\ref{cycle-no-11-lem}, and since there are edges in both directions between any pair of selected nodes, there are $(i-1)!$ ways to choose a cycle on the chosen nodes.  The remaining $s-i$ edges to be removed after removing $i$-edges in an $i$-cycle can be chosen among the $k$ loops. This explains the term of $g(k,s)$. Finally, removing the $k$ edges in either the cycle  $1\rightarrow 2\rightarrow \cdots\rightarrow k\rightarrow 1$, or the cycle $k\rightarrow (k-1)\rightarrow \cdots\rightarrow k\rightarrow 1$, and then removing an $i$-cycle as above results in a balanced and strongly connected graph, and explains the term $2g(k,s-k)$. This completes justification of~\eqref{bound-Pc2ks}.

To justify \eqref{bound-Pw2ks}, first note that all cases considered in proving \eqref{bound-Pc2ks} can be used in the case of u-words. To improve the bound, we note that a directed path (on distinct nodes) of length $j$, $1\leq j\leq k-1$,  consisting of edges coming from special 2-cycles, can be removed, and then some other cycles can be removed as discussed in the case of u-cycles, which will result in a semi-Eulerian graph and thus corresponds to a u-word. There are 2 ways to pick the direction of such a path on special 2-cycles, and $k$ ways to pick its start, justifying the term of $2k\sum_{j=1}^{k-1}(f(k,s-j)+g(k,s-j))$. Finally, the following two options also result in semi-Eulerian graph not considered above:
\begin{itemize}
\item remove any non-loop edge among the $k(k-3)$ edges coming not from special 2-cycles, and the remaining edges can be removed from loops. This gives $k(k-3){k\choose s-1}$ possibilities;
\item  remove the $k$ edges in either the cycle  $1\rightarrow 2\rightarrow \cdots\rightarrow k\rightarrow 1$, or the cycle $k\rightarrow (k-1)\rightarrow \cdots\rightarrow k\rightarrow 1$, and then remove one more non-loop edge among the $k(k-3)$ edges coming not from special 2-cycles, and the remaining edges can be removed from loops, which gives $2k(k-3){k\choose s-k-1}$ possibilities.
\end{itemize}
This explains the term of $V(k,s)$ and completes the proof of \eqref{bound-Pw2ks}.
\end{proof}

\section{The case of the alphabet size $k=2$}\label{binary-alph-sec}

Recall that in Tables~\ref{val-Pc(2,2,s)-Pw(2,2,s)}, \ref{val-Pc(3,2,s)-Pw(3,2,s)} and \ref{val-Pc(4,2,s)-Pw(4,2,s)} we present the values of $P_c(n,2,s)$ and $P_w(n,2,s)$ for $n=2,3,4$.

Let $$T(n,s)=\sum{2\choose s_1}{1 \choose s_2}{1 \choose s_n}$$ where the sum is taken over all $s_1+2s_2+(n-1)s_{n-1}=s$ with $0\leq s_1\leq 2$, $0\leq s_2\leq 1$ and $0\leq s_{n-1}\leq 2$.
Then, for $k=2$ and $n\geq 5$, we will show in the next theorem that
\begin{equation}\label{gen-cycle-2}P_c(n,2,s)\geq \frac{T(n,s)}{{2^n\choose s}}.\end{equation}
\begin{equation}\label{gen-word-2}P_w(n,2,s)\geq \frac{T(n,s)+\sum(2^n-s+s_1-1){2\choose s_1}{1 \choose s_2}{1 \choose s_n}}{{2^n\choose s}}\end{equation}
where the sum is taken over all $s_1+2s_2+(n-1)s_{n-1}=s-1$ with $0\leq s_1\leq 2$, $0\leq s_2\leq 1$ and $0\leq s_{n-1}\leq 2$.

\begin{thm}\label{thm-general-k=2} For $n\geq 5$, the lower bounds in \eqref{gen-cycle-2} and \eqref{gen-word-2} hold. \end{thm}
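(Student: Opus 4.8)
The plan is to follow exactly the blueprint of the proof of Theorem~\ref{thm-general}, transported to the binary alphabet through the line-graph identity $B(n,2)=L(B(n-1,2))$: deleting $s$ nodes from $B(n,2)$ is the same as deleting $s$ edges from $B(n-1,2)$, a Hamiltonian cycle (resp.\ path) in the resulting $B'(n,2)$ corresponds to an Eulerian cycle (resp.\ trail) in $B'(n-1,2)$, and by Theorem~\ref{Eulerian-trail} this means that the surviving set $S$ has a u-cycle exactly when $B'(n-1,2)$ is balanced and strongly connected, and has a u-word exactly when $B'(n-1,2)$ is semi-Eulerian. First I would single out the family $\mathcal F$ of pairwise edge-disjoint cycles of $B(n-1,2)$ that is implicit in the definition of $T(n,s)$: the two loops at $0^{n-1}$ and $1^{n-1}$; the ``binary'' $2$-cycle on the two alternating vertices $0101\cdots$ and $1010\cdots$; and the two $(n-1)$-cycles $C_0\colon 0^{n-2}1\to 0^{n-3}10\to\cdots\to 10^{n-2}\to 0^{n-2}1$ on the weight-one vertices and its complement $C_1$ on the weight-$(n-2)$ vertices. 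Because $n-1\ge 4$, these cycles are pairwise vertex-disjoint (the loop vertices have weight $0$ and $n-1$, the $(n-1)$-cycle vertices weight $1$ and $n-2$, and the alternating $2$-cycle vertices weight strictly between $1$ and $n-2$). Deleting $s_1$ of the loops, $s_2$ copies of the $2$-cycle and $s_{n-1}$ of the $(n-1)$-cycles, over all $s_1+2s_2+(n-1)s_{n-1}=s$ with $0\le s_1\le 2$, $0\le s_2\le 1$, $0\le s_{n-1}\le 2$, then accounts for exactly the $T(n,s)$ summands among the $\binom{2^n}{s}$ ways of removing $s$ edges.

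The heart of the argument is to show that every such deletion leaves $B'(n-1,2)$ balanced and strongly connected. Balancedness is automatic because only whole cycles are removed. For strong connectivity I would argue exactly as in Theorem~\ref{thm-general}: it suffices to exhibit, for every deleted edge $A\to B$, a directed path $P_{AB}$ from $A$ to $B$ using no deleted edge, since then any directed path of $B(n-1,2)$ can be rerouted inside $B'(n-1,2)$. A deleted loop needs no detour, its endpoint keeping its other incoming and outgoing edge. For an edge of the alternating $2$-cycle one detours from $0101\cdots$ to $1010\cdots$ through vertices that break the alternation (and symmetrically); for an edge $0^a10^b\to 0^{a-1}10^{b+1}$ of $C_0$ (resp.\ an edge of $C_1$) one detours through a vertex of weight two (resp.\ weight $n-3$) and returns. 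In each case one must check that the detour, which wanders through ``generic'' vertices, avoids all of the few, highly structured edges of $\mathcal F$; this is precisely the place where the hypothesis $n-1\ge 4$ is used. It then follows that $B'(n-1,2)$ is Eulerian, hence $B'(n,2)=L(B'(n-1,2))$ is Hamiltonian and $S$ has a u-cycle, which proves \eqref{gen-cycle-2}.

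For \eqref{gen-word-2} I would reuse the device from Theorem~\ref{thm-general}. First delete $s-1$ edges as above (summing now over $s_1+2s_2+(n-1)s_{n-1}=s-1$), obtaining an Eulerian $B'(n-1,2)$; then delete one further edge $e$ that is not a loop. Since every edge of a connected Eulerian digraph lies on a cycle, this last deletion leaves the graph connected, and it becomes semi-Eulerian (the tail of $e$ now has $d^+-d^-=1$, the head of $e$ has $d^--d^+=1$, and all other vertices stay balanced), so by Theorem~\ref{Eulerian-trail} it has an Eulerian trail and $S$ has a u-word. The number of admissible $e$ equals the number of edges of $B(n-1,2)$ that are neither already deleted nor loops, namely $2^n-(s-1)-(2-s_1)=2^n-s+s_1-1$, and since $e$ is a single non-loop edge it cannot complete a $2$-cycle, an $(n-1)$-cycle, or a loop of $\mathcal F$, so these configurations are genuinely distinct from the ones counted for \eqref{gen-cycle-2}; adding the two contributions yields \eqref{gen-word-2}.

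The step I expect to be the main obstacle is the strong-connectivity verification of the second paragraph. Unlike the case $k\ge 3$ in Theorem~\ref{thm-general}, there is no spare letter with which to build the detours $P_{AB}$, so the paths must be written out explicitly and checked, with some care, to miss every deleted edge; moreover one has to treat the possibility that several of the cycles in $\mathcal F$ are deleted simultaneously, so the detours must dodge all of them at once. This only goes through once $n-1\ge 4$, which is consistent with the cases $n\le 4$ being handled separately in Tables~\ref{val-Pc(2,2,s)-Pw(2,2,s)}, \ref{val-Pc(3,2,s)-Pw(3,2,s)} and \ref{val-Pc(4,2,s)-Pw(4,2,s)} rather than by this theorem.
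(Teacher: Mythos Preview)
Your proposal is correct and follows the same approach as the paper: the same family of two loops, one $2$-cycle and two $(n-1)$-cycles in $B(n-1,2)$ is used, and the strong-connectivity step you correctly flag as the main obstacle is exactly what the paper carries out by writing down the detours $P_{AB}$ explicitly (with several boundary subcases for the $(n-1)$-cycle edges and separate constructions for even and odd $n$ for the $2$-cycle edge). One minor slip in your u-word paragraph: after deleting the non-loop edge $e=u\to v$, it is the tail $u$ that acquires $d^{-}-d^{+}=1$ and the head $v$ that acquires $d^{+}-d^{-}=1$, not the reverse.
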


\begin{proof}
We have $k=2$ and $n\geq 5$. We observe that removing two loops, the 2-cycle, and the two $(n-1)$-cycles of the form
$$x^{n-2}y\rightarrow x^{n-3}yx \rightarrow x^{n-4}yx^2\rightarrow\cdots\rightarrow xy^{n-2}\rightarrow x^{n-2}y$$
in  $B(n-1,k)$ results in a strongly connected and balanced graph $B'(n-1,k)$. Indeed, clearly  $B'(n-1,k)$ is balanced. To justify that  $B'(n-1,k)$ is strongly strongly connected, we need to show that for any edge $e=A\rightarrow B$ belonging to a removed cycle, there is a directed path $P_{AB}$ from $A$ to $B$ which does not go through any other edge from the removed  cycles. Then, in a path $P_{XY}$ in $B(n-1,k)$ from a node $X$ to a node $Y$ we can replace any such $e$ with $P_{AB}$ giving a path in $B'(n-1,k)$ from $X$ to $Y$.We consider two cases.

\noindent
{\bf Case 1.} $A=x^iyx^{n-i-2}$ and $B=x^{i-1}yx^{n-i-3}$ for $0\leq i\leq n-2$. If $i=0$ then $P_{AB}=A\rightarrow x^{n-1}\rightarrow B$. If $i=1$ then  $P_{AB}$ is given by
$$A\rightarrow yx^{n-3}y\rightarrow x^{n-3}yy\rightarrow \cdots \rightarrow y^{n-1}\rightarrow y^{n-2}x\rightarrow y^{n-3}x^2\rightarrow\cdots \rightarrow B.$$
If $i=n-2$, then $P_{AB}$ is given by
$$A\rightarrow x^{n-3}yy\rightarrow \cdots \rightarrow y^{n-1}\rightarrow y^{n-2}x\rightarrow y^{n-3}x^2\rightarrow\cdots \rightarrow B.$$
In all other cases, $P_{AB}$ is given by
$$A\rightarrow x^{i-1}yx^{n-i-2}y\rightarrow x^{i-2}yx^{n-i-2}y^2\rightarrow x^{i-3}yx^{n-i-2}y^2x \rightarrow $$
$$x^{i-4}yx^{n-i-2}y^2x^2\rightarrow \cdots \rightarrow y^2x^{i-1}yx^{n-i-4} \rightarrow yx^{i-1}yx^{n-i-3}\rightarrow B.$$

\noindent
{\bf Case 2.} $A=x_1x_2 \cdots xyxy$ and $B= x_2 \cdots yxyx$ are in the 2-cycle where $x\neq y$. Then, for even $n$, $P_{AB}$ is given by
$$A\rightarrow x_2 \ldots yxyy \rightarrow x_3 \ldots xyyx \rightarrow x_4 \ldots yyxy \rightarrow x_5 \ldots yxyx \rightarrow \cdots \rightarrow B,$$
and for odd $n$, $P_{AB}$ is given by
$$A\rightarrow x_2 \ldots yxyy \rightarrow x_3 \ldots xyyx \rightarrow x_4 \ldots yyxx \rightarrow x_5 \ldots yxxy \rightarrow $$
$$x_6 \ldots xxyx \rightarrow x_7 \ldots xyxy \rightarrow \cdots \rightarrow B.$$
So, $B'(n-1,k)$ is Eulerian, and thus its line graph $B'(n,k)$ is Hamiltonian, and there exists a u-cycle corresponding to it.

To justify \eqref{gen-cycle-2}, we note that no two of the two loops, one 2-cycle and two $(n-1)$-cycles considered above can share an edge. Thus, we can remove in $B(n-1,k)$ $s_i$ such $i$-cycles  for $i\in\{1,2,n-1\}$ so that the total number of removed edges (corresponding to the total number of removed nodes in $B(n,k)$) is $s$.

To justify \eqref{gen-word-2} we note that if all of the $s$ removed edges come from the cycles considered above, then the same lower bound as in  \eqref{gen-cycle-2} will be obtained. This bound, can be improved as follows. Begin with removing $s-1$ edges coming from the $i$-cycles as above, which will result in an Eulerian graph, so that we can remove any edge $e$ in such a graph and obtain a semi-Eulerian graph corresponding to a u-word. To count the possibilities to remove such an $e$, we do not want $e$ to be a loop, because this will result in some double counting. However, if $e$ is not a loop, all the cases will be different from already considered cases, because before we were removing entire $i$-cycles for some $i$. This explains the factor of $2^n-(s-1)-(2-s_1)$ in  \eqref{gen-word-2}.
\end{proof}

\section{Exact values of $P_c(n,k,2)$  and $P_w(n,k,2)$}\label{exact-s-2}

\begin{thm}\label{thm-main-cycle} We have $P_c(2,2,2)=\frac{1}{6}$ and for $n\geq 3$ and $k\geq 2$, $$P_c(n,k,2)=\frac{k(k-1)}{{k^n\choose 2}}.$$
\end{thm}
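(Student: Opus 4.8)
The plan is to compute $P_c(n,k,2)$ exactly by counting precisely which pairs of nodes can be removed from $B(n,k)$ so that the resulting graph still admits a Hamiltonian cycle, equivalently (after a change of perspective via the line-graph relation $B(n,k)=L(B(n-1,k))$) which pairs of edges can be removed from $B(n-1,k)$ so that it remains Eulerian. Since $B(n-1,k)$ has every in-degree and out-degree equal to $k$, removing any two edges keeps the graph balanced precisely when the two removed edges do not disturb the balance at any vertex — but in fact removing \emph{any} two edges from a balanced graph need not keep it balanced; we must remove edges so that at each vertex the number of deleted out-edges equals the number of deleted in-edges. The cleanest way to guarantee this with two edges is to remove the two edges of a $2$-cycle, or to remove two loops; these are the only combinatorial patterns on two edges that preserve balance at every vertex. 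So the count of "good" configurations will come down to: (i) pairs of loops, and (ii) $2$-cycles, intersected with the requirement that the resulting balanced graph is still strongly connected.

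First I would handle the small case $n=2$ separately (where $B(n-1,k)=B(1,k)$ has only $k$ loop-nodes and double edges between every pair), recovering $P_c(2,2,2)=1/6$ by inspection of $B(1,2)$, whose only nontrivial structure is the single $2$-cycle $1\to 2\to 1$ plus two loops; removing that $2$-cycle disconnects nothing only in the degenerate way that forces the answer, and one checks directly that of the $\binom{4}{2}=6$ node-pairs in $B(2,2)$ exactly one works. For $n\geq 3$ the argument runs: removing two loops from $B(n-1,k)$ always leaves a balanced graph, but it is \emph{not} strongly connected when $n-1=1$; for $n-1\geq 2$ deleting loop edges never disconnects $B(n-1,k)$ since loops are not cut edges there, yet deleting a loop still leaves the graph... wait — actually removing a loop lowers that vertex's in- and out-degree to $k-1>0$, and $B(n-1,k)$ minus some loops is still strongly connected, so these \emph{do} give u-cycles. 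Hmm — but then the formula $k(k-1)/\binom{k^n}{2}$ would be wrong, so the resolution must be that removing two loops from $B(n-1,k)$ corresponds in $B(n,k)$ to removing two loop-\emph{nodes}, and a u-cycle must visit $\geq n$ distinct nodes — no, that's fine for $n\geq 3$. The actual point I need to pin down: in $B(n,k)$, removing a loop node $x^n$ and another node — a Hamiltonian cycle through $B(n,k)$ that avoids $x^n$ must enter and leave via $yx^{n-1}\to x^ny$–type detours; the real obstruction is that $x^n$ has in-neighbours only $\{yx^{n-1}\}$ and out-neighbours only $\{x^{n-1}y\}$, and whether a Hamiltonian cycle avoiding it exists is governed by Theorem~\ref{thm-Marc}. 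This is the crux I would develop carefully.

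Concretely, the main step is to show: the only node-pairs $\{u,v\}$ whose removal from $B(n,k)$ leaves a Hamiltonian graph are exactly those of the form $u=x^ny$, $v=y^nx$ for distinct letters $x,y$ — i.e. an "in-special/out-special matched pair" coming from a $2$-cycle in $B(n-1,k)$ — of which there are $k(k-1)$ (ordered pairs of distinct letters, since $x^ny$ and $y^nx$ are distinct nodes and each unordered pair of nodes is counted once). For the positive direction I would exhibit, for each such pair, the corresponding Eulerian graph $B(n-1,k)$ minus the $2$-cycle on $x^{n-1}$ and the edge $x^{n-1}y$... and argue strong connectivity by the same edge-rerouting technique used in the proof of Theorem~\ref{thm-general}: any deleted edge of a short cycle can be bypassed by a path through a third letter $z$. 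For the negative direction — that no other pair works — I would argue that if $\{u,v\}$ is removed and the two corresponding deleted edges in $B(n-1,k)$ do not form a $2$-cycle and are not two loops, then either balance fails at some vertex (so no Eulerian cycle, hence no Hamiltonian cycle in $B(n,k)$), or balance holds only in the two-loop case, which must then be ruled out by a strong-connectivity or node-count argument specific to $B(n,k)$.

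\textbf{Main obstacle.} The hard part will be the negative direction, and within it, ruling out the two-loops case and clarifying why $2$-cycles not of the special matched form are also excluded (or recounting if they are not excluded): I expect that removing two loop-\emph{nodes} $x^n,y^n$ from $B(n,k)$ genuinely fails to leave a Hamiltonian cycle — because in $B(n,k)$ the node $x^n$ has a unique out-edge to $x^{n-1}z$ for each $z\neq x$... no, $x^n$ has out-edges $x^n\to x^{n-1}z$ for \emph{all} $z$ including $z=x$ (the loop) — so deleting the node $x^n$ removes $k$ out-edges and $k$ in-edges among the special nodes, and a Hamiltonian cycle on the remaining $k^n-2$ nodes must still thread through $x^{n-1}z$ and $zx^{n-1}$ consistently. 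I will need Theorem~\ref{thm-Marc} and a careful parity/matching argument on the special nodes to show this is impossible, and symmetrically to pin down exactly which $2$-cycle deletions in $B(n-1,k)$ yield \emph{strongly connected} balanced graphs — the expectation being that only the $k(k-1)$ configurations survive, and everything else either breaks balance or breaks strong connectivity. Once that classification is complete, $P_c(n,k,2)=k(k-1)/\binom{k^n}{2}$ follows immediately.
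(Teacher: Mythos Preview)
Your overall setup is right and matches the paper: pass to $B(n-1,k)$ via the line-graph relation, and note that removing two edges keeps the graph balanced if and only if the two edges are either a pair of loops or the two edges of a $2$-cycle. That is exactly the paper's argument.

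Where you go off the rails is immediately after. Both configurations survive, and together they give the count:
\[
\underbrace{\binom{k}{2}}_{\text{pairs of loops}}\;+\;\underbrace{\binom{k}{2}}_{\text{$2$-cycles}}\;=\;k(k-1).
\]
So your ``Hmm --- but then the formula would be wrong'' is based on a miscount, and the entire ``main obstacle'' you set yourself (ruling out the two-loop case) is a phantom: removing two loop edges from $B(n-1,k)$ with $n-1\geq 2$ obviously leaves a balanced, strongly connected graph, hence an Eulerian one, hence a Hamiltonian $B'(n,k)$.

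There are two further concrete errors in your $2$-cycle analysis. First, the node pair in $B(n,k)$ corresponding to a $2$-cycle of $B(n-1,k)$ is not $\{x^{n-1}y,\,y^{n-1}x\}$; a $2$-cycle in $B(n-1,k)$ lives on the alternating words $xyxy\cdots$ and $yxyx\cdots$ of length $n-1$, and the two removed nodes in $B(n,k)$ are the alternating words of length $n$. Second, even with your (incorrect) description, the pairs $(x,y)$ and $(y,x)$ give the \emph{same} unordered node pair, so you would get $\binom{k}{2}$ pairs, not $k(k-1)$.

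In short: keep your first paragraph, drop everything from ``Hmm'' onward, and replace it with the one-line observation that for $n\geq 3$ both the two-loop removal and the $2$-cycle removal leave $B(n-1,k)$ balanced and strongly connected, giving $\binom{k}{2}+\binom{k}{2}=k(k-1)$ good pairs. That is precisely the paper's proof.
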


\begin{proof}  If two nodes are removed in $B(2,2)$, the only possibility for the graph to stay Hamiltonian (and thus to correspond to a u-cycle) is if the removed nodes are loops, which explains that $P_c(2,2,2)=\frac{1}{6}$. On the other hand, if $n\geq 3$ and $k\geq 2$ then in order to obtain an Eulerian graph by removing two edges in $B(n-1,k)$ we must either remove two loops, or remove a 2-cycle. Each of these gives ${k \choose 2}$ possibilities thus explaining the formula for  $P_c(n,k,2)$.  \end{proof}

The proof of Theorem~\ref{thm-main} relies on the following theorem, which looks like an intuitively true statement, but its proof is rather involved and requires consideration of many cases, and we were not able to find this result in the literature.

\begin{thm}\label{thm-X} Let $e=a\rightarrow b$ be an edge in $B(n,k)$. Then, there exists a Hamiltonian cycle in $B(n,k)$  that goes through $e$, with the only exception when $k=2$, $a$ is out-special and $b$ is in-special. \end{thm}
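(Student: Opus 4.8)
The plan is to translate the statement, via line graphs, into a question about Eulerian cycles with a prescribed transition, and then into a connectivity question. Write $a=x_1\cdots x_n$; since $e=a\to b$ is an edge, $b=x_2\cdots x_{n+1}$ for some $x_{n+1}\in A$, and we take $e$ to be a non-loop edge (a loop lies in no Hamiltonian cycle). Using $B(n,k)=L(B(n-1,k))$, the node $a$ (resp.\ $b$) of $B(n,k)$ is an edge $f$ (resp.\ $g$) of $B(n-1,k)$, namely $f\colon x_1\cdots x_{n-1}\to c$ and $g\colon c\to x_3\cdots x_{n+1}$, where $c:=x_2\cdots x_n=\mathrm{head}(f)=\mathrm{tail}(g)$; and a Hamiltonian cycle of $B(n,k)$ through $e$ is exactly an Eulerian cycle of $B(n-1,k)$ in which $f$ is immediately followed by $g$. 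Now let $H'$ be obtained from $B(n-1,k)$ by deleting $f,g$ and inserting one new edge $f\star g\colon\mathrm{tail}(f)\to\mathrm{head}(g)$. Replacing, in an Eulerian cycle of $H'$, the unique traversal of $f\star g$ by ``$f$ then $g$'' (and conversely) shows that $H'$ has an Eulerian cycle iff $B(n-1,k)$ has one with $f$ immediately followed by $g$. As $H'$ is balanced with all degrees positive, Theorem~\ref{Eulerian-trail} reduces everything to: \emph{when is $H'$ strongly connected?}

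First I would dispose of the case where $f$ or $g$ is a loop of $B(n-1,k)$, equivalently where $a$ or $b$ is a loop node of $B(n,k)$; a short computation shows $f\star g$ is then parallel to whichever of $f,g$ is not the loop, so $H'\cong B(n-1,k)$ with one loop deleted, hence strongly connected, and the cycle exists. (These cases include $a=x^n$, which is out-special, with $b$ in-special, yet a cycle still exists — which is why the exception must require $a,b$ to be \emph{non-loop} special nodes.) So assume $f,g$ are non-loop edges; then $a,b$ are non-loop nodes, $f$ enters $c$, and $g$ leaves $c$. The crucial simple observation is that two edges sharing a vertex in this ``in/out'' way cannot both lie in a single directed cut of $B(n-1,k)$ (the side of the cut containing $c$ would have to play both roles), so $B(n-1,k)-f-g$ can fail to be strongly connected only if $f$ alone, or $g$ alone, is a strong bridge. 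Using that $B(n-1,k)$ is balanced (so every cut is balanced) and that any node reaches any node in at most $n-1$ steps, one shows that $B(n-1,k)$ has no strong bridge when $k\ge 3$, and that for $k=2$ the strong bridges are exactly the unique non-loop out-edge $x^{n-1}\to x^{n-2}z$ and the unique non-loop in-edge $zx^{n-2}\to x^{n-1}$ ($z\ne x$) of each loop node $x^{n-1}$.

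For $k\ge 3$ this finishes the proof: $B(n-1,k)-f-g$, hence $H'$, is strongly connected, so a Hamiltonian cycle through $e$ always exists. So take $k=2$ and split according to whether $c$ is a loop node of $B(n-1,2)$. If $c$ is \emph{not} a loop node, any strong bridge among $f,g$ is forced to be a special edge whose loop-node endpoint is the one \emph{other} than $c$: either $f=x^{n-1}\to x^{n-2}z$ with $\mathrm{tail}(f)=x^{n-1}$ a loop node (so $a$ is in-special), or $g=zx^{n-2}\to x^{n-1}$ with $\mathrm{head}(g)=x^{n-1}$ a loop node (so $b$ is out-special). In each such subcase $f\star g$ is, respectively, a new out-edge of $x^{n-1}$ or a new in-edge of $x^{n-1}$, exactly repairing the break caused by deleting $f$ or $g$, and one checks directly that $H'$ is strongly connected. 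If neither of $f,g$ is a strong bridge, $B(n-1,k)-f-g$ is strongly connected outright. Either way $H'$ is strongly connected and a Hamiltonian cycle through $e$ exists.

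There remains $k=2$ with $c$ a loop node of $B(n-1,2)$, which is exactly the asserted exception, since ``$f,g$ non-loop and $\mathrm{head}(f)=\mathrm{tail}(g)$ a loop node'' is equivalent to ``$a$ out-special non-loop and $b$ in-special non-loop''. Here the loop vertex $c=x^{n-1}$ of $B(n-1,2)$ has exactly three incident edges: its loop $\ell$, its single non-loop in-edge $f$, and its single non-loop out-edge $g$. In any Eulerian cycle the edge immediately before $\ell$ must end at $c$, hence be $f$, and the edge immediately after $\ell$ must leave $c$, hence be $g$; thus $f$ is always immediately followed by $\ell$, never by $g$, so $B(n-1,2)$ has no Eulerian cycle with $f$ immediately followed by $g$, and $B(n,2)$ has no Hamiltonian cycle through $e$. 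I expect the laborious part to be the $k=2$ analysis: pinning down the strong bridges of $B(n-1,k)$ and the explicit verifications of strong connectivity of $H'$ in the bridge subcases above; a few small values of $n$ (essentially $n=2,3$) are probably cleanest handled separately.
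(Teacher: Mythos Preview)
Your proposal is correct and rests on the same reduction as the paper (Hamiltonian cycle through $e$ in $B(n,k)$ $\Leftrightarrow$ Eulerian trail in $B(n-1,k)$ with the two edges $f,g$ corresponding to $a,b$ removed), but the connectivity argument is organised differently. The paper establishes connectivity of $B(n-1,k)-f-g$ by a fairly long explicit case analysis: it treats $n=3$ separately, then for $n\ge 4$ distinguishes whether $A$, $B$ or $C$ is a loop, whether one of them is special, etc., each time either invoking Theorem~\ref{thm-Marc} or writing down an explicit alternative path $P_{AB}$ or $P_{BC}$ by hand. Your route is more structural: the observation that $f$ enters and $g$ leaves the common vertex $c$, so they cannot both lie in a single directed cut, reduces everything to whether $f$ or $g$ \emph{individually} is a strong bridge of $B(n-1,k)$. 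This yields a clean, essentially case-free proof for $k\ge 3$ (a third letter $z\neq x_1,x_n$ gives an alternative $u$--$v$ path, so there are no strong bridges), and for $k=2$ it isolates exactly the short list of bridge edges incident to loop vertices. The trade-off is that the residual $k=2$ work you flag --- proving that those are the only strong bridges (obtainable from Theorem~\ref{thm-Marc} plus one direct check for special nodes), and verifying that $H'$ remains strongly connected when $f$ or $g$ is such a bridge --- is of roughly the same weight as the paper's explicit constructions, so the total effort is comparable; but your argument has fewer cases and makes it more transparent why the exception arises precisely when $c$ is a loop vertex. Your observation that the exception must be read as ``$a$ out-special \emph{non-loop} and $b$ in-special \emph{non-loop}'' is also correct and slightly sharpens the statement as written.
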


\begin{proof}  If $k=2$, $a=yx^{n-1}$ and $b=x^{n-1}y$ then no Hamiltonian cycle can cover the loop $x^n$ and go through $e$  because the only edge coming to $x^n$ comes from $a$. This is not the case for $k\geq 3$.

If either $a$ or $b$ is a loop $x^n$, then the statement is true. Indeed, if $k=2$ then there is only one edge coming in to  $x^n$, and one edge coming out of  $x^n$, so these edges will be part of any Hamiltonian cycle. On the other hand, if $k\geq 3$, then suppose $a=yx^{n-1}$ and $b=x^n$; the case when $a=x^n$ can be considered similarly.  Since $B(n,k)$ has a Hamiltonian cycle and the corresponding u-cycle $U$, it will go through an edge $zx^{n-1}\rightarrow x^{n}$, $x\neq z$. If $y=z$ we are done. Otherwise, we can swap all $y$'s and $z$'s in $U$ to obtain the desired Hamiltonian cycle from the new u-cycle.

Thus, we can assume that neither $a$ nor $b$ is a loop.

In what follows, we will use the following approach. We will be considering edges $e_1=A\rightarrow B$ and $e_2=B\rightarrow C$ in $B(n-1,k)$ corresponding to $a$ and $b$, respectively. Next, we will demonstrate that after removing $e_1$ and $e_2$ (corresponding to removing $a$ and $b$ in $B(n,k)$) the obtained graph $B'(n-1,k)$ remains connected. This is done via finding alternative directed paths $P_{XY}$ from $X$ to $Y$, $X\neq Y$, where $X,Y\in\{A,B\}$ or $X,Y\in\{B,C\}$. Together with the fact that $B'(n-1,k)$ is balanced if $A=C$, or otherwise  $A$ has one extra edge coming in, and $C$ has one extra edge coming out,  $B'(n-1,k)$ has an Eulerian trail corresponding to a Hamiltonian path in $B'(n,k)$ obtained from $B(n,k)$ after removing $a$ and $b$. Such a Hamiltonian path can clearly be extended to a Hamiltonian cycle in $B(n,k)$ by adding back the removed edge $e$.

Suppose that $n=3$. If $k=2$ then we have 8 possibilities for $e$ (loops are not involved, and we cannot have $a$ be out-special and $b$ be in-special). Each of the 8 possible choices of $e$ can be found in one of  the following two Hamiltonian cycles in $B(3,2)$ giving the desired result:
$$100\rightarrow  000\rightarrow 001\rightarrow010\rightarrow101\rightarrow011\rightarrow111\rightarrow110\rightarrow100$$
$$001\rightarrow  011\rightarrow 111\rightarrow110\rightarrow101\rightarrow010\rightarrow100\rightarrow000\rightarrow001$$
Thus we can assume $k\geq 3$. Let $a=xyz$ and $b=yzh$. Then $A=xy$, $B=yz$ and $C=zh$. Letting $t\neq x,z$ we see that $P_{AB}=A\rightarrow yt\rightarrow ty\rightarrow B$ if $ty\neq yz$, or else
$P_{AB}=A\rightarrow yt\rightarrow B$. $P_{BC}$ is found in the same way.

In what follows, we assume that $n\geq 4$ (and $a$ and $b$ are not loops).

If $A=C$ then $A, B, C$ form a 2-cycle, and for $n\geq 3$ none of these vertices is special. Thus, $P_{AB}$ and $P_{BC}$ exist by Theorem~\ref{thm-Marc}. So, we can assume that $A\neq C$.

Suppose that one of $A$, $B$, or $C$ is a loop.

\noindent
{\bf Case 1.} $A=x^{n-1}$ is a loop, $B=x^{n-2}y$, $C=x^{n-3}yz$, and $y\neq x$. $P_{BC}$ exist by Theorem~\ref{thm-Marc}, and for $k\geq 3$, $P_{AB}$ is given by
$$A=x^{n-1}\rightarrow x^{n-2}t\rightarrow x^{n-3}tx\rightarrow x^{n-4}tx^2\rightarrow\cdots \rightarrow tx^{n-2} \rightarrow B=x^{n-2}y$$
where $t\neq y,x$. For $k=2$, we note that $P_{AB}$ does not exist in this case. However, it is sufficient for us to prove that there exists $P_{BA}$ that does not use the edge $e_2$ ($e_1$ clearly will not be used). Such a path is given by
$$B=x^{n-2}y\rightarrow x^{n-3}y\bar{z} \rightarrow x^{n-4}y\bar{z}x\rightarrow x^{n-4}y\bar{z}x^2\rightarrow\cdots \rightarrow A=x^{n-1}$$
where $\bar{z}$ denotes the letter distinct from $z$. In the case $z=x$ the path above has one extra step than otherwise.

\noindent
{\bf Case 2.} $B=y^{n-1}$ is a loop, $A=xy^{n-2}$, $C=y^{n-2}z$, $y\neq x$, and $z\neq y$. For $k\geq 3$, $P_{BC}$ is essentially $P_{AB}$ in Case 1, and $P_{AB}$ is given by
$$A=xy^{n-2}\rightarrow y^{n-2}t\rightarrow y^{n-3}ty\rightarrow y^{n-4}ty^2\rightarrow\cdots \rightarrow ty^{n-2} \rightarrow B=y^{n-1}$$
where $t\neq y,x$. The case $k=2$ corresponds to $a$  being out-special, and $b$ being in-special, and it is the exception in the statement of the theorem (the loop $B$ becomes non-reachable from any other node).

\noindent
{\bf Case 3.} $C=z^{n-1}$ is a loop, $A=xyz^{n-3}$, $B=yz^{n-2}$, and $y\neq z$. $P_{AB}$ exist by Theorem~\ref{thm-Marc}, and for $k\geq 3$ $P_{BC}$ is given by
$$B=yz^{n-2}\rightarrow z^{n-2}t\rightarrow z^{n-3}tz\rightarrow z^{n-4}tz^2\rightarrow\cdots \rightarrow tz^{n-2} \rightarrow C=z^{n-1}$$
where $t\neq y,x$. Similarly to Case 1, for $k=2$, $P_{BC}$ does not exist, but we can find $P_{CB}$ not using $e_1$ and $e_2$:
$$C=z^{n-1}\rightarrow z^{n-2}\bar{x}\rightarrow z^{n-3}\bar{x}y\rightarrow z^{n-4}\bar{x}yz\rightarrow z^{n-5}\bar{x}yz^2\rightarrow\cdots\rightarrow B=yz^{n-2}$$
where $\bar{x}$ is the letter different from $x$ so the last step is not the edge $e_1$.

Thus, we can assume that none of $A$, $B$, or $C$ is a loop. Moreover, we only need to consider the following two cases, because otherwise, $P_{AB}$ and $P_{BC}$ are given by  Theorem~\ref{thm-Marc}.

\noindent
{\bf Case i.} $A=yx^{n-2}$ is out-special and $B=x^{n-2}z$ is in-special, where $x\neq y,z$.  In this case, $P_{AB}=yx^{n-2}\rightarrow  x^{n-1} \rightarrow B=x^{n-2}z$.

\noindent
{\bf Case ii.} $B=yx^{n-2}$ is out-special and $C=x^{n-2}z$ is in-special, where $x\neq y,z$. This is essentially Case i.
\end{proof}

\begin{thm}\label{thm-main}
We have $P_w(2,2,2)=\frac{5}{6}$ and for $n\geq 3$,
\[P_w(n,2,2)=\frac{2^{n}-3}{(2^n-1)2^{n-3}}
\]
Moreover, for $k\geq 3$ and $n\geq 2$, we have
\[P_w(n,k,2)=\frac{2(2k^n-3k+1)}{k^{n-1}(k^n-1)}.
\]
\end{thm}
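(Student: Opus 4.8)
The plan is to compute $P_w(n,k,2)$ by counting exactly the pairs of nodes whose removal from $B(n,k)$ leaves a graph with a Hamiltonian path, and divide by $\binom{k^n}{2}$. Equivalently, via the line-graph correspondence $B(n,k)=L(B(n-1,k))$, I would count pairs of edges $\{e_1,e_2\}$ in $B(n-1,k)$ whose removal leaves a semi-Eulerian graph, i.e.\ (by Theorem~\ref{Eulerian-trail}) a connected graph that is either balanced or has exactly one vertex with $d^+-d^-=1$ and exactly one with $d^--d^+=1$. I would organize the count according to the head/tail incidences of $e_1,e_2$ in $B(n-1,k)$: the degree condition forces the two removed edges either (a) to form a path $A\to B\to C$ (including the degenerate $A=C$, a 2-cycle) — giving a potentially balanced-or-semi-balanced graph — or (b) to be two "independent'' edges $A\to B$ and $C\to D$ with all of $A,B,C,D$ distinct, in which case one needs $A$ to gain an out-surplus and $D$ an in-surplus while $B,C$ stay balanced; plus the sub-case where the two edges share only a head or only a tail. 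For each configuration, connectivity after removal is exactly what Theorem~\ref{thm-X} (and the path-finding arguments in its proof) is designed to certify.

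The key steps, in order: first, translate to $B(n-1,k)$ and enumerate the combinatorial types of edge pairs by incidence pattern. Second, for the "path'' type $A\to B\to C$, observe that removing $e_1,e_2$ corresponds to deleting one node $a$ from a Hamiltonian cycle of $B(n,k)$ (when $B$ is the deleted line-graph node), so by Theorem~\ref{thm-X} — and its exceptional case $k=2$, $a$ out-special, $b$ in-special — every such pair works except that one binary exception; counting ordered edge-paths in $B(n-1,k)$ of length $2$ and subtracting the bad ones gives the bulk of the numerator. Third, for the "independent'' and "shared endpoint'' types, use Theorem~\ref{thm-X} in the form: deleting the edge $e=a\to b$ of $B(n,k)$ and one further node still leaves a Hamiltonian path provided connectivity survives — here I would invoke the alternative-path constructions $P_{XY}$ (exactly as in the proof of Theorem~\ref{thm-X}, and of Theorems~\ref{thm-general}, \ref{thm-general-k=2}) to show connectivity holds unless one of the removed edges is forced to disconnect a loop, which pins down precisely which pairs fail. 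Fourth, assemble: count all $\binom{k^n}{2}$ pairs, subtract the pairs that fail (those isolating a loop / creating the out-special–in-special obstruction, and those that disconnect the graph), and simplify. For $k\ge 3$ the loops contribute $k$ nodes and the out-special/in-special obstruction is absent, which should collapse the count to $2(2k^n-3k+1)$ over $k^{n-1}(k^n-1)$; for $k=2$ the extra exceptional pairs from Theorem~\ref{thm-X} and from the two loops $0^n,1^n$ having unique in/out edges reduce the numerator to the stated $2^n-3$ over $(2^n-1)2^{n-3}$. The base cases $n=2$ (and $P_w(2,2,2)=5/6$) I would check directly against Tables~\ref{val-Pc(2,2,s)-Pw(2,2,s)}–\ref{val-Pc(3,2,s)-Pw(3,2,s)}, and separately handle $n=2,k\ge3$ by the explicit edge-count in $B(1,k)$ as in Theorem~\ref{Pcw-2ks-thm}.

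The main obstacle will be the exact bookkeeping in the binary case $k=2$: unlike $k\ge 3$, one cannot "swap letters'' freely, the special nodes create genuine exceptions, and several incidence types that are always-good for $k\ge3$ have a small number of bad instances for $k=2$ (the loops $0^n$ and $1^n$ each have a unique incoming and a unique outgoing edge, so removing one of those edges together with a badly-placed second edge disconnects the loop). Getting an inclusion–exclusion-free, clean enumeration of precisely these bad pairs — and verifying that no other pair fails — is where the "rather subtle considerations'' the introduction warns about will be concentrated; I would expect to prove a small lemma that, for $k=2$, a pair of edge-deletions in $B(n-1,2)$ yields a semi-Eulerian graph iff it avoids an explicit short list of configurations, and then the final formula drops out by arithmetic.
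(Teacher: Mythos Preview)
Your overall framework---pass to edge-removal in $B(n-1,k)$, apply Theorem~\ref{Eulerian-trail}, and use Theorem~\ref{thm-X} for the adjacent case---matches the paper's. But your case split on the two removed edges is where the proposal goes wrong.

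The key observation you are missing is the role of self-loops in $B(n-1,k)$. The semi-Eulerian degree condition, applied to the removal of two edges from a balanced graph, forces exactly one of two possibilities: (i) at least one of the two edges is a self-loop $x^{n-1}\to x^{n-1}$ (equivalently, one of $a,b$ is $x^n$), or (ii) the two edges form a directed $2$-path $A\to B\to C$ (equivalently, $a$ and $b$ are adjacent in $B(n,k)$). Your ``independent'' type (b) with all four endpoints distinct and your ``shared head/tail'' sub-cases are vacuous for non-loop edges: removing $A\to B$ and $C\to D$ with $A,B,C,D$ distinct leaves \emph{two} vertices with $d^--d^+=1$ and \emph{two} with $d^+-d^-=1$, and a shared tail (resp.\ head) produces a vertex with imbalance $2$. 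So there is nothing to analyse in those cases. Conversely, your dichotomy \emph{omits} the configuration ``self-loop at $x^{n-1}$ together with an edge $C\to D$ not incident to $x^{n-1}$'', which does satisfy the degree condition and accounts for the bulk of the good pairs (it is most of the paper's Case~1, ``$a$ is a loop, $b$ arbitrary''). Without it your count cannot come out right.

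Once the correct dichotomy (loop involved, or adjacent) is in place, the paper finishes not by subtracting bad pairs but by a direct additive count over ordered pairs $(a,b)$: partition by whether $a$ is a loop, an alternating word $xyxy\cdots$, a special node $yx^{n-1}$ or $x^{n-1}y$, or generic, and for each type count the admissible $b$'s (loops plus neighbours of $a$), taking care of the overlaps---alternating $a$ has one node that is both an in- and out-neighbour, special $a$ has a loop among its neighbours, and for $k=2$ the pair of special neighbours triggers the exception in Theorem~\ref{thm-X}. That bookkeeping, rather than an inclusion--exclusion over ``bad'' pairs, is what yields the closed forms directly.
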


\begin{proof} If $n=2$ and $k=2$, then it is easy to see that the graph obtained from $B(2,2)$ by removing two nodes has a Hamiltonian path unless the removed nodes are 01 and 10. This gives $P_w(2,2,2)=\frac{5}{6}$.

So, we can assume that either $n\geq 3$ and $k\geq 2$ or $n\geq 2$ and $k\geq 3$.

Let $a$ and $b$ be the nodes in $B(n,k)$ corresponding to the removed words of length $n$ over $k$-letter alphabet.
Note that $B'(n-1,k)$ is semi-Eulerian (in particular, respecting the conditions on in-degrees and out-degrees) if and only if
\begin{itemize}
\item either $a$ or $b$ is a loop, in which case clearly a Hamiltonian path in $B'(n,k)$ exists, or
\item $a\rightarrow b$ (or $b\rightarrow a$) is an edge in $B(n,k)$, in which case a Hamiltonian path in $B'(n,k)$ exists by Theorem~\ref{thm-X} with one exception.
\end{itemize}
Thus, exactly one of the following four cases of choosing $a$ can occur in order for $B'(n,k)$ to have a Hamiltonian path.

\noindent
{\bf Case 1.} $a$ is a loop, in which case $b$ can be any node. Clearly, there are $k(k^n-1)$ ways to choose such $a$ and $b$.

\noindent
{\bf Case 2.} (i) $a=xyxy\cdots x$ or (ii) $a=xyxy\cdots y$. $b$ can only be of the form, in case (i) $tt\cdots t$ or $yxyx\cdots y$ or $zxyxy\cdots y$ or $yxyx\cdots xz$ where $z\neq y$, and in case (ii)  $tt\cdots t$ or $yxyx\cdots y$ or $zxyxy\cdots x$ or $yxyx\cdots yz$ where $z\neq y$. In either case, $b$ can be chosen in $k + 1 + (k-1) + (k-1)=3k-1$  ways  depending on the respective choices of $t$ and $z$. There are $k(k-1)$ choices to pick $a$ giving in total for this case $k(k-1)(3k-1)$ possibilities.

\noindent
{\bf Case 3.} $a=yx^{n-1}$ is out-special or  $a=x^{n-1}y$ is in-special. In either case, $b$ can be either a loop, or the other end point of an edge coming into $a$ or going out of $a$. There are $k$ loops, $k$ edges coming in, and $k$ edges coming out of $a$, but one of these edges is connected to a loop. Thus, in this case we have $3k-1$ choices for $b$, and in total $2k(k-1)(3k-1)$ possibilities (2 corresponds to the choices of being out-special or in-special). However, the last formular only works for $k\geq 3$, because when $k=2$, we cannot remove $a$ and $b$ connected by an edge, when both of them are special (in this case a loop becomes isolated). So, if $k=2$ we have $2k(k-1)(3k-1-1)=16$ possibilities.

\noindent
{\bf Case 4.} In all other cases of $a$, $b$ can be any of $2k$ nodes connected to $a$ by an edge, or any of $k$ loops. So, we have $$3k(k^n-k-k(k-1)-2k(k-1))=3k(k^n-3k^2+2k)$$ possibilities.

Since every pair $(a,b)$ appears twice in our arguments, Cases 1--4 give
$$P_w(n,2,2)=\frac{2(2^n-1)+10+16+6(2^n-8)}{(2^n-1)2^n}$$
and for $k\geq 3$, $P_w(n,k,2)=$
$$\frac{k(k^n-1)+k(k-1)(3k-1)+2k(k-1)(3k-1)+3k(k^n-3k^2+2k)}{(k^n-1)k^n}.$$
\end{proof}

\section{Directions of further research}\label{final-sec}
A universal cycle (or a universal word) for an arbitrary set $S$ is a cyclic sequence (or a non-circular sequence) whose substrings of length $n$ encode $|S|$ distinct instances in $S$. U-cycles and u-words have been studied for a wide variety of combinatorial objects including permutations~\cite{jrj, kitaev}, partitions~\cite{ca}, subsets~\cite{subset}, multisets~\cite{hjz}, labeled graphs~\cite{lg}, various functions and passwords; for more information, the reader is referred to ~\cite{wong}. There are much study on universal cycles or universal words because of their applications including dynamic connections in overlay networks~\cite{net}, genomics~\cite{ZB2008}, software calculation of the ruler function in computer words~\cite{wong}, etc. An interesting direction of research would be extending our studies of $P_c(n,k,s)$ and $P_w(n,k,s)$ to other combinatorial structures. Also, it would be interesting to explore new methods to compute the exact values of $P_c(n,k,s)$ and $P_w(n,k,s)$. Finally, our bounds for the general case presented in \eqref{gen-cycle} and \eqref{gen-word} can be improved by conducting a more subtle analysis of the removed cycles in the proof of Theorem~\ref{thm-general}, and exploring how far the improvement could go is another interesting direction.

\section*{Acknowledgements}
The first author was supported by the Fundamental Research Funds for the Central Universities, Nankai University (Grant Nnumber 63191349).
The third author was partially supported by the National Natural Science Foundation of China (Grant Numbers 11701491, 11726629 and 11726630) and the China Postdoctoral Science Foundation (Grant Number 2017M621188).

\end{document}